\begin{document}

\def\A{\mathbb{A}}
\def\B{\mathbf{B}}
\def \C{\mathbb{C}}
\def \F{\mathbb{F}}
\def \K{\mathbb{K}}

\def \Z{\mathbb{Z}}
\def \P{\mathbb{P}}
\def \R{\mathbb{R}}
\def \Q{\mathbb{Q}}
\def \N{\mathbb{N}}
\def \Z{\mathbb{Z}}

\def\B{\mathcal B}
\def\e{\varepsilon}

\def\cA{{\mathcal A}}
\def\cB{{\mathcal B}}
\def\cC{{\mathcal C}}
\def\cD{{\mathcal D}}
\def\cE{{\mathcal E}}
\def\cF{{\mathcal F}}
\def\cG{{\mathcal G}}
\def\cH{{\mathcal H}}
\def\cI{{\mathcal I}}
\def\cJ{{\mathcal J}}
\def\cK{{\mathcal K}}
\def\cL{{\mathcal L}}
\def\cM{{\mathcal M}}
\def\cN{{\mathcal N}}
\def\cO{{\mathcal O}}
\def\cP{{\mathcal P}}
\def\cQ{{\mathcal Q}}
\def\cR{{\mathcal R}}
\def\cS{{\mathcal S}}
\def\cT{{\mathcal T}}
\def\cU{{\mathcal U}}
\def\cV{{\mathcal V}}
\def\cW{{\mathcal W}}
\def\cX{{\mathcal X}}
\def\cY{{\mathcal Y}}
\def\cZ{{\mathcal Z}}

\def\f{\frac{|\A||B|}{|G|}}
\def\AB{|\A\cap B|}
\def \Fq{\F_q}
\def \Fqn{\F_{q^n}}

\def\({\left(}
\def\){\right)}
\def\fl#1{\left\lfloor#1\right\rfloor}
\def\rf#1{\left\lceil#1\right\rceil}
\def\Res{{\mathrm{Res}}}

\newcommand{\comm}[1]{\marginpar{
\vskip-\baselineskip \raggedright\footnotesize
\itshape\hrule\smallskip#1\par\smallskip\hrule}}

\newtheorem{lem}{Lemma}
\newtheorem{lemma}[lem]{Lemma}
\newtheorem{prop}{Proposition}
\newtheorem{proposition}[prop]{Proposition }
\newtheorem{thm}{Theorem}
\newtheorem{theorem}[thm]{Theorem}
\newtheorem{cor}{Corollary}
\newtheorem{corollary}[cor]{Corollary}
\newtheorem{prob}{Problem}
\newtheorem{problem}[prob]{Problem}
\newtheorem{ques}{Question}
\newtheorem{question}[ques]{Question}
\newtheorem{rem}{Remark}
\newtheorem{defin}{Definition}

\title{Multiplicative decomposition of arithmetic progressions in prime fields}

\author{{M. Z. Garaev}
\\
\normalsize{Centro de Ciencias Matem\'{a}ticas,}
\\
\normalsize{Universidad Nacional Aut\'onoma de M\'{e}xico,}\\
\normalsize{Morelia 58089, Michoac\'{a}n, M\'{e}xico}\\
\normalsize{\tt garaev@matmor.unam.mx}\\
\and\\
{S.~V.~Konyagin}\\
\normalsize{Steklov Mathematical Institute,}\\
\normalsize {Gubkin Street 8,}\\
\normalsize {Moscow, 119991, Russia}\\
\normalsize{\tt konyagin@mi.ras.ru} }


\date{\empty}

\pagenumbering{arabic}

\maketitle

\begin{abstract}
We prove that there exists an absolute constant $c>0$ such that if
an arithmetic progression $\cP$ modulo a prime number $p$ does not
contain zero and has the cardinality less than $cp$, then it can not
be represented as a product of two subsets of cardinality greater
than $1$, unless $\cP=-\cP$ or $\cP=\{-2r,r,4r\}$ for some residue
$r$ modulo $p$.
\end{abstract}
\maketitle


\section{Introduction}

Let $\F_p$ be the field of residue classes modulo a prime number
$p.$ Given two sets $\cA, \cB\subset \F_p$ their sum-set $\cA+\cB$
and product-set $\cA\cB$ are defined as
$$
\cA+\cB=\{a+b ; \, a\in \cA,\, b\in \cB\}; \quad \cA\cB=\{ab; \, a\in \cA,\, b\in \cB\}.
$$
A set $\cS\subset\F_p$ is said to have a nontrivial additive
decomposition, if
$$
\cS=\cA+\cB
$$
for some sets $\cA\subset\F_p,\, \cB\subset\F_p$ with $|\cA|\ge
2,\,|\cB|\ge 2.$

The problem of nontrivial additive decomposition of multiplicative
subgroups of $\F_p^{*}$ has been recently investigated in several
works. S\'ark\"ozy~\cite{Sark} conjectured that the set of quadratic
residues has no nontrivial additive decomposition and obtained
several results on this problem. Further progress in this direction
has been made by Shkredov~\cite{Shkr} and Shparlinski~\cite{Shpr}.

In the present paper we are interested in multiplicative
decomposition of intervals in $\F_p$. This problem has been
investigated by Shparlinski~\cite{Shpr}. Let
$$\cI=\{n+1,
n+2,\ldots,n+N\}\pmod p
$$
be an interval in $\F_p$. Shparlinski observed that if $N<(p-1)/32$
and if there is a decomposition $\cI=\cA\cB$, $|\cA|\ge2,
|\cB|\ge2,$ then Bourgain's sum-product estimate (see,
Lemma~\ref{lem:Bourg} below) leads to the sharp bound
$$
N\le |\cA||\cB|\le 32 N.
$$
Here, for a positive integer $k$ and a set $\cX$, the notation
$k\cX$ is used to denote the $k$-fold sum of $\cX$, that is
$$
k\cX=\{x_1+\ldots+x_k;\quad x_i\in \cX\}.
$$

Clearly, if $0\in \cS\subset\F_p$, then one has the decomposition
$\cS=\{0,1\}\cS$. On the other hand, if for such a set $\cS$ we have
$\cS\setminus\{0\}=\cA\cB$, then it follows that
$\cS=\{\cA\cup\{0\}\}\cB$. We give the following definition.

\begin{defin}
We say that the set $\cS\subset\F_p$ has a nontrivial multiplicative
decomposition if
\begin{equation}
\label{eqn:S=AB def}
\cS\setminus\{0\}=\cA\cB
\end{equation}
for some sets $\cA\subset\F_p,\,\cB\subset\F_p$ with $|\cA|\ge 2,\,
|\cB|\ge 2.$
\end{defin}

Any nonzero set $\cS$ with $\cS=-\cS$ admits a nontrivial
decomposition, namely~\eqref{eqn:S=AB def} holds with $\cA=\{-1,1\},
\cB=\cS\setminus\{0\}$. We also note that for $p\ge 5$ we have the
following decomposition of a special interval of $3$ elements:
$$
\{3^*-1,3^*,3^*+1\}\pmod p=\{-1,2\} \cdot \{-3^*,1-3^*\}\pmod p.
$$
Here and below  $n^*$ denotes the multiplicative inverse of $n$
modulo $p$.

In the present paper we prove the following statement.

\begin{theorem}
\label{thm:main} There exists an absolute constant $c>0$ such that
if an interval $\cI\subset\F_p$ of cardinality $|\cI|<cp$ has a
nontrivial multiplicative decomposition, then either
$$
\cI=\pm \{3^*-1,3^*,3^*+1\}\pmod p,
$$
or
$$
\cI=-\cI.
$$
 In the latter case
any nontrivial decomposition $\cI\setminus\{0\} =\cA\cB$ implies
that one of the sets $\cA$ or $\cB$ coincides with $\{-r,r\}$ for
some residue class $r\in\F_p$.
\end{theorem}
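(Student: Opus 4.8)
The plan is to normalize, invoke the sum-product estimate to control the sizes of the two factors, reduce to the case where the smaller factor has two elements, and finish that case by hand. Dilating $\cA$ replaces $\cI$ by a dilate of itself (still an arithmetic progression) and leaves the conclusion unchanged, since the exceptional sets $-\cI$ and $\pm\{3^*-1,3^*,3^*+1\}$ are dilation-equivariant; so I may take $\cI=\{M+1,\dots,M+N\}$ to be a genuine interval of consecutive residues, $N=|\cI|<cp$, and write $\cI\setminus\{0\}=\cA\cB$ with $K:=|\cA|\le|\cB|=:L$ and $K\ge2$. Applying Lemma~\ref{lem:Bourg} exactly as in Shparlinski's observation recalled above gives $KL\le 32N$, while trivially $KL\ge|\cA\cB|\ge N-1$; hence $N-1\le KL\le 32N$, so the dilates $a\cB$ $(a\in\cA)$ almost tile $\cI\setminus\{0\}$, and likewise the dilates $b\cA$ $(b\in\cB)$, and $L\ge(N-1)/K$.

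Next I would exploit the arithmetic of progressions in $\F_p$. For distinct $a,a'\in\cA$, every $(a-a')b$ with $b\in\cB$ is a difference of two elements of the interval $\cI$, so $\cB\subseteq(a-a')^{-1}\{-(N-1),\dots,N-1\}$, a progression of length $2N-1$; if $|\cA|\ge3$ then $\cB$ lies in the intersection of three such progressions with distinct common differences. The elementary input is that two progressions of length $\le\ell<c'p$ in $\F_p$ whose common differences have ratio $\rho$ meet in $O(\ell/m)$ points unless $\rho=t/s$ for integers with $\max(|s|,|t|)=O(\ell/m)$; i.e.\ two progressions meeting in $m$ points have commensurable differences of bounded height. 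Applying this to $\cB$ (via three elements of $\cA$) and, symmetrically, to $\cA$ (via three elements of $\cB$, legitimate since $L\ge K\ge3$ in the surviving case) forces, when $K\ge3$, each of $\cA$ and $\cB$ into a progression of length bounded in terms of $K$ and $L$; substituting into $\cA\cB=\cI$ — a product of two short progressions that equals a whole interval — then bounds $N$ by an absolute constant, and a finite check over all small configurations in all $\F_p$ leaves only $\cI=\pm\{3^*-1,3^*,3^*+1\}$. So henceforth $K=2$.

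With $K=2$, write $\cA=\{a_1,a_2\}$ and $\theta:=a_2/a_1\ne1$. Since $a_1,a_2\in\cA$, both $a_1\cB$ and $a_2\cB$ lie in $\cI$, so $\cB\subseteq a_1^{-1}\cI\cap a_2^{-1}\cI$, an intersection of two progressions of length $N$ with difference-ratio $\theta$; from $|\cB|\ge(N-1)/2$ and the estimate above, $\theta=t/s$ with $\max(|s|,|t|)\le2$, that is $\theta\in\{-1,\pm2,\pm\tfrac12\}$ (the finitely many small $N$ checked separately). If $\theta=-1$ then $\cA=a_1\{1,-1\}=\{-r,r\}$, and $\cI=a_1(\cB\cup(-\cB))$, so $a_1^{-1}\cI$ is symmetric and hence $\cI=-\cI$: this is the second alternative, with $\cA=\{-r,r\}$. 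If $\theta=\pm2$ or $\pm\tfrac12$ then $|\cB|\le N/2+O(1)$, so $|\cB\cap\theta\cB|=2|\cB|-|\cI\setminus\{0\}|=O(1)$, and $\cB$ together with its dilate $\theta\cB$ almost perfectly tiles the progression $a_1^{-1}\cI$; a short direct argument using that this progression comes from a genuine interval shows this is impossible unless $N=3$, giving $\cI=\pm\{3^*-1,3^*,3^*+1\}$. Finally, if $\cI=-\cI$ is given, the alternative $\cI=\pm\{3^*-1,3^*,3^*+1\}$ is excluded because that set is not symmetric ($3^*\ne0$); so, running the argument on whichever of $\cA,\cB$ has two elements, we land in the case $\theta=-1$ and that factor equals $\{-r,r\}$.

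I expect the genuine obstacle to be the reduction to $K=2$: both the sharp quantitative form of the progression-intersection lemma and, more seriously, the passage from ``$\cA,\cB$ are short progressions with $\cA\cB$ an entire interval'' to an absolute bound on $N$, where the tension between the additive rigidity of $\cI$ and the near-tiling multiplicative structure is hardest to pin down. Once $\theta$ is confined to the five values above, the $K=2$ analysis is routine.
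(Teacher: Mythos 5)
Your opening moves are right and match the paper's: normalize so that $\cI$ is a genuine interval, apply Lemma~\ref{lem:Bourg} in Shparlinski's way to get $N-1\le|\cA||\cB|\le 32N$, and look for commensurability of the ``slopes'' attached to pairs of elements. But two essential steps are left as assertions, and they are precisely where the theorem's difficulty lies; the paper's machinery exists to fill exactly these holes.

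First, the reduction to $K=2$. You propose to use three elements of $\cA$ to trap $\cB$ inside the intersection of three dilated copies of $[-(N-1),N-1]$, deduce commensurability of the differences, and conclude that $\cB$ (and symmetrically $\cA$) lies in a \emph{short} progression. This does not follow. With only three elements of $\cA$ in hand, the common difference of the intersection progression can be as small as the original difference (ratio $t/s$ with $|s|=|t|=1$), so the intersection need not have length comparable to $|\cB|$ — it can still have length $\asymp N$, which is useless. To force $\cA$ into a progression of length $O(|\cA|)$ one needs control of \emph{all} of $\cA$, not three points; the paper obtains this by applying Lemma~\ref{lem:Bourg} to $2^{\nu}\cA$ and $\cB$ for all $\nu\le k$ with $2^k\asymp p/N$, extracting from the telescoping product~\eqref{eqn:kA<} a level $\ell$ with small doubling $|2^{\ell+1}\cA|<\frac{12}{5}|2^{\ell}\cA|-3$, and then invoking Freiman's theorem (Lemma~\ref{lem:Frei}) plus Lemma~\ref{lem:Positive PropAP}. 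Your sketch has no substitute for this.

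Second, and more seriously, even granting that $\cA$ and $\cB$ lie in progressions of length $O(|\cA|)$ and $O(|\cB|)$, your claim that ``a product of two short progressions that equals a whole interval bounds $N$ by an absolute constant, and a finite check... leaves only $\pm\{3^*-1,3^*,3^*+1\}$'' is not justified and is not how the argument can go. The symmetric case $\cI=-\cI$ gives decompositions of arbitrarily long intervals, so there is no absolute bound on $N$; and ``a finite check over all $\F_p$'' is not finite, because for each of infinitely many $p$ the short progressions containing $\cA,\cB$ can sit anywhere in $\F_p$ with arbitrary common difference. What the containment in short progressions actually buys is the possibility of \emph{rational reconstruction}: the paper shows, by a delicate bootstrap using Lemma~\ref{lem:Dir} and the induction~\eqref{eqn:uj=tju1}, that every $b\in\cB$ satisfies $b\equiv u/v\pmod p$ with $|u|=O(|\cB|)$, $1\le v=O(1)$, and then (after a further dilation) the same for $\cA$. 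Only after both $\cA$ and $\cB$ have been lifted to sets of bounded-height rationals can one reduce to an identity over $\Q$ and apply Lemma~\ref{lem:Rationals}, which is what isolates the two exceptional families. Your $K=2$, $\theta=\pm2$ subcase (``a short direct argument... shows this is impossible unless $N=3$'') runs into the same lifting problem: $\cB\cup 2\cB=a_1^{-1}\cI\setminus\{0\}$ is a statement in $\F_p$, and the dilation by $2$ can wrap around $p$, so one cannot simply argue as if $\cB$ were a set of integers without the rational reconstruction step.

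In short: you have correctly located the quantitative input (Bourgain) and the qualitative goal (commensurability), and you are candid that the $K\ge3$ reduction is the obstacle — but the proposal offers no mechanism for it, and the subsequent ``finite check'' and ``routine $K=2$'' claims are also not substantiated. The missing idea is exactly the iterated-doubling/Freiman step to get $\cA,\cB$ into progressions of length $O(|\cA|)$, $O(|\cB|)$, followed by the rational reconstruction and the transfer to the rational-progression Lemma~\ref{lem:Rationals}.
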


The following statement shows that the constant $c$ in the condition
of Theorem~\ref{thm:main} can not be taken $c=1/2$.
\begin{theorem}
\label{thm:p=1 mod 4} Let $p\equiv 1\pmod 4$, $p\not=5$. Then for
any integer $L$ satisfying
$$
\frac{p-1}{2}\le L\le p-1
$$
the interval
$$
\cI=\{1,2,\ldots, L\}\pmod p
$$
admits a nontrivial multiplicative decomposition.
\end{theorem}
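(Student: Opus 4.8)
The plan is to exhibit an explicit decomposition $\cI=\{1,\zeta\}\cdot\cB$, where $\zeta$ is a square root of $-1$. Since $p\equiv1\pmod4$ such a $\zeta\in\F_p^{*}$ exists, and as $\zeta^{2}=-1$ it has order $4$, so $\langle\zeta\rangle=\{1,\zeta,-1,-\zeta\}$. Because $L\le p-1$ we have $0\notin\cI$, hence $\cI\setminus\{0\}=\cI$, and it suffices to produce a set $\cB$ with $|\cB|\ge2$ and $\cB\cup\zeta\cB=\cI$: then $\cA=\{1,\zeta\}$ has two elements (as $\zeta\ne1$) and $\cA\cB=\cB\cup\zeta\cB=\cI$.

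First I would partition $\F_p^{*}$ into the orbits of the permutation $x\mapsto\zeta x$. Since $\zeta$ has order $4$ and $4\mid p-1$, each orbit has the form $O=\{x,\zeta x,-x,-\zeta x\}$ and there are exactly $(p-1)/4$ of them. The set $\cB$ will be assembled orbit by orbit: I claim that inside each orbit $O$ one can choose $B_{O}\subseteq O\cap\cI$ with $B_{O}\cup\zeta B_{O}=O\cap\cI$. Granting this, put $\cB=\bigcup_{O}B_{O}$; then $\zeta B_{O}\subseteq O$ for each $O$, so $\cB\cup\zeta\cB=\bigcup_{O}(B_{O}\cup\zeta B_{O})=\bigcup_{O}(O\cap\cI)=\cI$, the last equality because the orbits partition $\F_p^{*}\supseteq\cI$.

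The heart of the argument is therefore the per-orbit claim. View $O$ as a $4$-cycle in which the step is multiplication by $\zeta$; since $\zeta^{2}=-1$, the two ``antipodal'' pairs of the cycle are $\{x,-x\}$ and $\{\zeta x,-\zeta x\}$. For any pair $\{y,-y\}$ exactly one representative lies in $\{1,\dots,(p-1)/2\}$; applied to the two antipodal pairs of $O$ this shows that the two elements of $O$ belonging to $\cJ:=\{1,\dots,(p-1)/2\}$ sit at positions of opposite parity in $\Z/4\Z$, i.e.\ at two \emph{adjacent} vertices of the cycle. Since $L\ge(p-1)/2$ we have $\cJ\subseteq\cI$, so $S:=O\cap\cI$ always contains this adjacent pair; consequently, identifying the cycle with $\Z/4\Z$ and multiplication by $\zeta$ with $j\mapsto j+1$, the set $S$ is an adjacent $2$-set, a $3$-set, or all of $\Z/4\Z$ --- never an antipodal $2$-set. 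In each of these shapes one writes down a suitable $B$ with $B\subseteq S$ and $B\cup(B+1)=S$: for instance $B=\{0\}$ if $S=\{0,1\}$, $B=\{0,1\}$ if $S=\{0,1,2\}$, and $B=\{0,2\}$ if $S=\Z/4\Z$, the remaining cases being rotations of these. In particular $|B_{O}|\ge1$ for every orbit.

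It remains to check the cardinality condition. One has $|\cB|=\sum_{O}|B_{O}|\ge(p-1)/4$, and the hypotheses $p\equiv1\pmod4$, $p\ne5$ force $p\ge13$, so $|\cB|\ge3\ge2$. Thus $\cI=\{1,\zeta\}\cdot\cB$ is a nontrivial multiplicative decomposition, as required. The only genuine obstacle is the per-orbit analysis, and within it the one thing that must be ruled out is the shape $S=\{0,2\}$ (an antipodal pair), for which no valid $B$ exists; this is precisely the point where the assumption $L\ge(p-1)/2$ enters. (For $p=5$ and $L=2$ the construction degenerates to $|\cB|=(p-1)/4=1$, which explains the exclusion of $p=5$.)
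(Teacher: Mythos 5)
Your proof is correct and rests on the same key idea as the paper: decompose $\cI=\{1,\zeta\}\cdot\cB$ where $\zeta^2=-1$, using the fact that $L\ge(p-1)/2$ forces at least one element of each antipodal pair $\{y,-y\}$ to lie in $\cI$. The paper reaches this more directly by taking the maximal candidate $\cA=\{x\in\cI:\ hx\in\cI\}$ with $\cB=\{1,h\}$ and deriving a contradiction from $hx\notin\cI$, $-hx\notin\cI$ and $h^2\equiv-1$, whereas your orbit-by-orbit construction of $\cB$ is an equivalent but more elaborate route to the same decomposition.
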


For arbitrary prime $p\ge 3$, we have the following result.

\begin{theorem}
\label{thm:k1k2} Let $p\ge 3$ and $k_1,k_2$ be  integers satisfying
$$
k_1\ge 0.4(p-1),\quad k_2\ge 0.4(p-1).
$$
Then the interval
$$
\cI=\{n\in\Z; \, -k_1\le n\le k_2\}\pmod p
$$
admits a nontrivial multiplicative decomposition.
\end{theorem}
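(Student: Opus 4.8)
The plan is to write down one explicit decomposition, with $\cA=\{1,2\}$, that works for every admissible $\cI$. Writing $\cI$ in terms of residues as $\cI=\{0,1,\dots,k_2\}\cup\{p-k_1,\dots,p-1\}$, its complement
$$
\cJ:=\F_p\setminus\cI=\{k_2+1,\dots,p-k_1-1\}
$$
is an interval not containing $0$ (it is empty exactly when $k_1+k_2\ge p-1$, i.e.\ when $\cI=\F_p$). Put $T:=\cI\setminus\{0\}=\F_p^{*}\setminus\cJ$, and set
$$
\cA=\{1,2\},\qquad \cB:=(\cI\cap 2^{-1}\cI)\setminus\{0\}=T\cap 2^{-1}T .
$$
Since $T,\,2^{-1}T\subseteq\F_p^{*}$ and $|T|\ge 0.8(p-1)$, we get $|\cB|\ge 2|T|-(p-1)\ge 0.6(p-1)$, hence $|\cB|\ge2$ (the tiny cases where $\cI=\F_p$ being settled by inspection), and $|\cA|=2$; so the decomposition, once verified, is nontrivial.

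Next I would check the identity $\cA\cB=T$. The inclusion $\cA\cB=\cB\cup2\cB\subseteq T$ is immediate from $\cB\subseteq T$ and $2\cB\subseteq 2\cdot 2^{-1}T=T$. Conversely, let $t\in T$: if $2t\in T$ then $t\in T\cap 2^{-1}T=\cB$, and if $t/2\in T$ then $t/2\in\cB$, so $t=2(t/2)\in 2\cB$. Hence $t$ can escape $\cA\cB$ only when $2t\notin T$ and $t/2\notin T$, i.e.\ (as $2t,t/2\in\F_p^{*}=T\cup\cJ$) only when $2t\in\cJ$ and $t/2\in\cJ$, that is, $t\in 2^{-1}\cJ\cap 2\cJ$. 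So everything reduces to the claim $2\cJ\cap 2^{-1}\cJ=\emptyset$. (More conceptually: for a two-element factor $\{1,a\}$ the natural companion set is $T\cap a^{-1}T$, and it succeeds precisely when $a\cJ\cap a^{-1}\cJ\subseteq\cJ$; the point of $a=2$ is that this intersection becomes outright empty.)

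This last claim is exactly where the constant $0.4$ is spent. The hypotheses give $k_2+1\ge 0.4(p-1)+1$ and $p-k_1-1\le 0.6(p-1)$, so $\cJ$ lies in the central band $[\,0.4(p-1)+1,\ 0.6(p-1)\,]$. For $j$ in this band, $2j\in[\,0.8(p-1)+2,\ 1.2(p-1)\,]$, so after reducing mod $p$ one gets $2\cJ\subseteq[\,0,\ 0.2(p-1)\,)\cup[\,0.8(p-1)+2,\ p-1\,]$, a small arc about $0$; whereas $2^{-1}j$ equals $j/2$ or $(j+p)/2$ according to the parity of $j$, so $2^{-1}\cJ\subseteq(\,0.2(p-1),\ 0.3(p-1)\,]\cup[\,0.7(p-1)+1,\ 0.8(p-1)+1\,]$, two arcs about $p/4$ and $3p/4$. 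A comparison of endpoints — with a little room to spare from the integer shifts — shows these two regions are disjoint, which is the required claim; the case $\cI=\F_p$ (then $\cJ=\emptyset$) is covered by the very same construction with nothing to verify, so no separate argument is needed. I do not expect any real obstacle beyond this endpoint bookkeeping: it is precisely what pins down the constant $0.4$ (any smaller constant would let the arc of $2\cJ$ meet an arc of $2^{-1}\cJ$), and it calls only for a little care with parities and with the smallest primes, for which $\cI=\F_p$ in any case.
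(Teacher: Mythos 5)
Your proposal is correct and is essentially the paper's own proof: the paper also uses the factor $\{1,2\}$ paired with $\cB=\{x\in\cI\setminus\{0\}:2x\in\cI\setminus\{0\}\}=(\cI\cap2^{-1}\cI)\setminus\{0\}$, and the decisive fact is the same observation that for every nonzero $x$ at least one of $2x$, $2^{-1}x$ lies in $\cI$, which is exactly your claim $2\cJ\cap2^{-1}\cJ=\emptyset$. The only difference is presentational — the paper checks this by noting that if $2^{-1}x\notin\cI$, i.e.\ $x\equiv 2n$ with $k_2<n<p-k_1$, then $2x\equiv 4n-2p$ lies in $(-0.4(p-1),\,0.4(p-1))\subseteq[-k_1,k_2]$, whereas you compute the arcs of $2\cJ$ and $2^{-1}\cJ$ and compare endpoints; the paper also dispatches the case $\cI=\F_p$ up front via $\F_p^{*}=\F_p^{*}\cdot\F_p^{*}$, matching your remark about the smallest primes.
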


\bigskip

The idea behind the proof of Theorem~\ref{thm:main} is as follows.
As Shparlinski, we use Bourgain's sum product estimate. Here, we
apply it to the sets $k\cA$ and $\cB$ for  a suitable integer $k$
(which can be as large as constant times $p/(|\cA||\cB|)$). Then we
use some arguments from additive combinatorics and show that the set
$\cA$ (and $\cB$) forms a positive proportion of some arithmetic
progression modulo $p$. Using this information we eventually reduce
our problem to its analogy in $\Q$ (the set of rational numbers).

Throughout the paper some absolute constants are indicated
explicitly in order to make the arguments more transparent.

\section{The case of rational numbers}

\begin{lem}
\label{lem:Rationals} Let $\cP\subset\Q$ be a finite arithmetic
progression such that
$$
\cP\setminus\{0\}=\cA\cB
$$
for some sets $\cA\subset \Q,\, \cB\subset \Q$ with $|\cA|\ge
2,\,|\cB|\ge 2$. Then there exist rational numbers $r, r_1,r_2$ such
that either
$$
\cA=\{-r_1,2r_1\},\quad \cB=\{-r_2,2r_2\},
$$
or one of the sets $\cA$ or $\cB$ coincides with the set $\{-r,r\}$.
\end{lem}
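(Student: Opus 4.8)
The plan is to work entirely over $\Q$, or rather over $\Z$ after clearing denominators, and exploit the rigidity of multiplication against the linear structure of an arithmetic progression. First I would normalize: write $\cP = \{a, a+d, \dots, a+(m-1)d\}$ with $d \neq 0$, and by scaling both $\cP$ and one of the factors (say $\cA \mapsto t\cA$, which only changes $r, r_1, r_2$ by the factor $t$) reduce to the case $d = 1$, so $\cP$ is a genuine interval of consecutive integers (possibly containing $0$, which we then delete). Since $\cP\setminus\{0\} = \cA\cB$ and $|\cA|, |\cB| \ge 2$, both $\cA$ and $\cB$ are finite sets of rationals; clearing the denominators of all elements of $\cA$ and absorbing the reciprocal into $\cB$ (and vice versa), I may further assume $\cA, \cB \subset \Z$ and moreover that $\gcd$ of the elements of $\cA$ is $1$ and likewise for $\cB$ — any common factor can be shuffled between the two sets without affecting the product set. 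After these reductions $\cA\cB$ is a set of consecutive integers in an interval $[N+1, N+M]$ (minus possibly $0$).

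Next I would extract the main structural constraint. Let $\alpha_{\min}, \alpha_{\max}$ be the least and greatest elements of $\cA$ and similarly $\beta_{\min}, \beta_{\max}$ for $\cB$; the extreme products $\alpha_{\min}\beta_{\min}, \alpha_{\min}\beta_{\max}, \alpha_{\max}\beta_{\min}, \alpha_{\max}\beta_{\max}$ must lie in the interval, and in fact the diameter of $\cA\cB$ is about $M \approx |\cA|\cdot|\cB|$ up to the trivial lower bound $\mathrm{diam}(\cA\cB)\ge \mathrm{diam}(\cA) + \mathrm{diam}(\cB)$ type inequalities — but crucially, since $\cA\cB$ has \emph{exactly} $M$ elements (it is an interval), the map $(a,b)\mapsto ab$ is "almost injective," which is extremely restrictive. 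I would argue that a product set of two integer sets can be an interval only in very degenerate situations. The cleanest route: if $\cA$ has two elements $a_1 < a_2$ of the same sign, say both positive, then for any $b_1 < b_2$ in $\cB$ one has $a_1 b_1 < a_1 b_2, a_2 b_1 < a_2 b_2$ and $a_1 b_2, a_2 b_1$ both strictly between $a_1 b_1$ and $a_2 b_2$; counting lattice points forces $a_2/a_1$ and $b_2/b_1$ to be tightly controlled, and pushing this with three or more elements rules out $|\cA| \ge 3$ unless $\cB$ is essentially $\{-r, r\}$. The key special configuration $\{-r_1, 2r_1\}\cdot\{-r_2, 2r_2\}$ arises because $\{-1,2\}\cdot\{-1,2\} = \{-2, 1, 4\}$, i.e. $\{1-3\cdot(-1), \dots\}$ — wait, more simply $\{-2,1,4\}$ is not an interval, but $r_1 r_2\{-2,1,4\}$ is an AP of difference $3r_1r_2$: this is exactly the exceptional $3$-term progression, and after undoing the normalization $d=1$ it must be accounted for separately (it does not survive as an interval over $\Z$, only as an AP, which is why the statement is phrased for progressions, not intervals).

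The main obstacle, and where I would spend the most care, is the case analysis when both $|\cA| \ge 3$ and $|\cB| \ge 3$: I need to show this is impossible (over $\Z$, for a true interval) unless we are in the $\{-r,r\}$ situation, which then can't happen if $|\cB|\ge 3$ — so really I must show $|\cA|\ge 3$ and $|\cB|\ge 3$ together force a contradiction, and that $|\cA| = |\cB| = 2$ forces the $\{-r_1,2r_1\}, \{-r_2,2r_2\}$ shape (an easy computation: $\{a_1,a_2\}\{b_1,b_2\}$ is a $3$-term AP iff, up to ordering and sign, $\{a_1 b_1, a_2 b_2\}$ are the outer terms and $a_1 b_2 + a_2 b_1 = \frac{1}{2}(a_1b_1 + a_2 b_2)\cdot 2$, leading after scaling to the ratio $-1:2$). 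For the impossibility when both sets are large, I would use a monotonicity/interleaving argument: order $\cA = \{a_1 < \dots < a_k\}$ and $\cB = \{b_1 < \dots < b_\ell\}$ after separating signs; the $k\ell$ products, being all distinct and filling an interval, impose $k\ell - 1$ consecutive-difference equations, but the multiset of pairwise ratios in $\cA$ and in $\cB$ is far too constrained — a Sidon-type or gcd argument shows that having all products consecutive forces all of $\cA$ (or $\cB$) into a two-element set. Concretely, consider three consecutive integers $n, n+1, n+2$ in $\cA\cB$ with $n = a b$: then $n+1$ and $n+2$ are also products, and looking at these near both ends of the interval pins down the structure. I anticipate this interleaving/counting argument is the crux; the rest is bookkeeping of signs and the normalization.
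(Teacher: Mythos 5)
Your normalization is where the proposal breaks, and the break is not cosmetic. You scale $\cP$ and $\cA$ to make the common difference $d=1$ and then assert $\cP$ is ``a genuine interval of consecutive integers.'' But the only allowed operation is dilation, so $\cP=\{c,c+d,\dots,c+(m-1)d\}$ becomes $\{c/d,c/d+1,\dots\}$, which is a set of consecutive integers only if $c/d\in\Z$. This fails precisely on the exceptional progression: $\{-2r,r,4r\}$ rescaled to $d=1$ is $\{-2/3,1/3,4/3\}$, and no further dilation of $\cA\mapsto s\cA$, $\cB\mapsto s^{-1}\cB$ (the only residual freedom, since $\cA\cB$ is now fixed) makes $\cA,\cB\subset\Z$: any factorization is forced into the shape $\cA=\{-t,2t\}$, $\cB=\{-1/(3t),2/(3t)\}$. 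So the exceptional case does not ``not survive over $\Z$''; it never enters your framework at all, which means the framework silently drops the one case you must characterize. The paper normalizes $\cA$ and $\cB$ \emph{independently} to coprime integer sets and lets $d$ be whatever it then is; $d$ carries the structural information, and the first substantive step is to show, from $(a_1-a_2)b\in\cP-\cP\subset d\Z$ for all $b\in\cB$ together with $\gcd(\cB)=1$, that $\cA$ (and symmetrically $\cB$) lies in an arithmetic progression of difference $d$. That lemma is absent from your sketch and is what makes everything downstream work.

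Beyond normalization, the two computational pieces are also not sound. The $2\times2$ analysis as you state it — that $\{a_1b_1,a_2b_2\}$ are the outer terms and $a_1b_2+a_2b_1=a_1b_1+a_2b_2$ — rearranges to $(a_1-a_2)(b_1-b_2)=0$, a contradiction; the correct constraint is that the \emph{repeated} product $a_1b_2=a_2b_1$ must be an \emph{endpoint} of the $3$-term AP, and only then does the ratio $-1{:}2$ fall out. For the $|\cA|,|\cB|\ge3$ case you invoke an unspecified ``Sidon-type or gcd argument''; this is a placeholder, not an argument. The paper does not need such a case split: taking $a_0,b_0$ to be the max-modulus elements (WLOG positive), the second-largest element of $\cP$ is $a_0b_0-d=a_1b_1$; the equalities $|a_1|=a_0$ or $|b_1|=b_0$ are excluded using the $d$-spacing of $\cA,\cB$ and non-triviality, giving $d\ge a_0+b_0-1$, which against $a_0,b_0\ge(d+1)/2$ forces $a_0=b_0=(d+1)/2$, hence $\cA=\cB=\{(1-d)/2,(1+d)/2\}$, and a one-line check then yields $d=3$. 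Your outline contains neither this pivot nor a workable substitute.
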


\begin{proof}

Assume contrary, let $\cP\setminus\{0\}=\cA\cB$ be such that the
sets $\cA$ and $\cB$ do not satisfy the conclusion of the lemma. We
dilate the set
 $\cA$  such that the new set $\cA'$ consists on integers that are relatively  prime.
 Similarly we construct the set $\cB'$. Then  $\cA'\cB'$ is also a set
 of integers that are relatively prime and we have $\cP'\setminus\{0\}=\cA'\cB'$,
 where $\cP'\subset\Q$ is an arithmetic progression ($\cP'$ is a dilation of $\cP$).
 Let $d$ be the difference of this progression.
 We rewrite $\cA=\cA', \cB=\cB', \cP=\cP'$.

\bigskip

Let $a_1, a_2\in \cA$. For any element $b\in \cB$ we have
$(a_1-a_2)b\in \cP-\cP$, implying $(a_1-a_2)b\equiv 0\pmod d.$ It
then follows from the construction of $\cB$ that $a_1-a_2\equiv
0\pmod d$. Thus, the set $\cA$ is contained in a progression with
difference $d$. Analogously the set $\cB$ is  contained in a
progression with difference $d$.

\bigskip

Define $a_0$ and $b_0$ to be the maximal by absolute value elements
of $\cA$ and $\cB$ correspondingly. Without loss of generality we
can assume that $a_0>0$, $b_0>0$. Since $\cA\not=\{-a_0,a_0\},\,
\cB\not=\{-b_0,b_0\}$, we have
\begin{equation}
\label{eqn:a_0>1, b_0>1}
a_0\ge \max\left\{2, \frac{d+1}{2}\right\}, \quad b_0\ge \max\left\{2,\frac{d+1}{2}\right\}.
\end{equation}
Then $a_0b_0$ is the largest element of $\cP$ and  $a_0b_0>d$. From
$$
a_0b_0-d\in \cP\setminus\{0\}=\cA\cB,
$$
it follows that for some $a_1\in\cA,\, b_1\in\cB$ we have
\begin{equation}
\label{eqn:a0b0-d=a1b1}
a_0b_0-d=a_1b_1.
\end{equation}
If $|a_1|=a_0$, then
$$
d=a_0(b_0-|b_1|)\ge (b_0-|b_1|)\frac{d+1}{2}.
$$
This implies that $b_0-|b_1|=1$ and $a_0=d$. Since by the
assumption, $\cA\not=\{-d,d\}$ and $0\not\in \cA$, we get a
contradiction with the maximality property of $a_0$.

Thus, in~\eqref{eqn:a0b0-d=a1b1} we have $|a_1|\not=a_0$. Similarly,
$|b_1|\not=b_0.$ It then follows that
$$
d=a_0b_0-|a_1||b_1|\ge a_0b_0-(a_0-1)(b_0-1)=a_0+b_0-1.
$$
Combining this with~\eqref{eqn:a_0>1, b_0>1} we get that
$$
a_0=\frac{d+1}{2};\quad b_0=\frac{d+1}{2};\quad d\ge 3.
$$
Therefore, the maximality properties of $a_0$ and $b_0$ imply that
$$
\cA=\cB=\Bigl\{\frac{1-d}{2},\, \frac{1+d}{2}\Bigr\}.
$$
Hence,
$$
\cP\setminus\{0\}=\cA\cB= \Bigl\{\frac{1-d^2}{4},\frac{(1-d)^2}{4},\, \frac{(1+d)^2}{4}\Bigr\}.
$$
Since $(1-d)^2/4\not=d$, we get
$$
\frac{1-d^2}{4}=\frac{(1-d)^2}{4}-d.
$$
This implies $d=3,\,\cA=\cB=\{-1,2\}$ and  concludes the proof of
our lemma.
\end{proof}

\section{Some facts from additive combinatorics}

We need several facts from additive combinatorics.

\begin{lem}
\label{lem:Bourg} Let $\cX\subset\F_p,\,\cY\subset\F_p$ and
$\cX\not=\{0\}, \cY\not=\{0\}$. Then
$$
|8\cX\cY-8\cX\cY|\ge\frac{1}{2}\min\{|\cX||\cY|, p-1\}.
$$
\end{lem}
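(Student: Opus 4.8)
The plan is to deduce this from Bourgain's sum–product theorem in $\F_p$, which in its standard form asserts that for $\cX\subset\F_p$ with $1<|\cX|<p^{1-\delta}$ one has $\max\{|\cX+\cX|,|\cX\cdot\cX|\}\gg |\cX|^{1+\varepsilon}$, and more usefully in the quantitative form (due to Bourgain, and later sharpened by Garaev, Bourgain--Garaev, etc.) that for $\cX,\cY\subset\F_p^*$ one has $|\cX\cdot\cY + \cX\cdot\cY|\gg \min\{|\cX||\cY|, p\}$ up to an absolute multiplicative constant and a bounded number of sum/product operations. So the statement we must prove is essentially a clean repackaging of a known result with explicit constants $8$ and $1/2$, obtained by iterating whatever version is cited; I would quote the relevant theorem and then verify that applying it a bounded number of times upgrades the exponent of $8$ and yields the constant $1/2$ in front of $\min\{|\cX||\cY|,p-1\}$.

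The first step is to reduce to the case $0\notin\cX$ and $0\notin\cY$: replacing $\cX$ by $\cX\setminus\{0\}$ changes $|\cX|$ by at most $1$ and does not increase any of the sets $8\cX\cY-8\cX\cY$, and the hypothesis $\cX\neq\{0\}$ guarantees $\cX\setminus\{0\}\neq\emptyset$; a short argument handles the degenerate case where one of the sets becomes a singleton (then $\min\{|\cX||\cY|,p-1\}$ is comparable to the size of the other set and the bound is easy, since $\cX\cdot\cY-\cX\cdot\cY$ already contains a dilate of $\cY-\cY$). The second step is to invoke the sum–product estimate for the sets $\cX\cdot\cY$, or alternatively apply the Bourgain--Garaev style bound directly, to get $|k(\cX\cdot\cY) - k(\cX\cdot\cY)|\geq \tfrac12\min\{|\cX||\cY|,p-1\}$ for some absolute $k$; the third step is the purely combinatorial observation that $k(\cX\cdot\cY)-k(\cX\cdot\cY)\subseteq 8\cX\cdot\cY-8\cX\cdot\cY$ once $k\le 8$, which reduces to checking that the constant coming out of the cited theorem can be taken $\le 8$ (and if the raw constant is larger, one first shrinks it at the cost of a smaller leading constant than $1/2$, then re-runs Plünnecke--Ruzsa to restore it).

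The genuinely delicate point — and the part I expect to be the main obstacle — is getting the constants to match \emph{exactly} $8$ and $\tfrac12$ rather than "some absolute constants". The cited sum–product literature typically produces an estimate of the shape $|C\cdot\cX\cdot\cX - C\cdot\cX\cdot\cX|\geq c\min\{|\cX|^2,p\}$ with unspecified $C$ and $c$, so to reach the stated clean form one has to run a careful Plünnecke--Ruzsa / Ruzsa covering argument. Concretely, one uses that if $|A+A|$ or $|A-A|$ is large one is done, and otherwise $A$ has small doubling, so Plünnecke's inequality bounds $|kA-kA|$ by $K^{O(1)}|A|$, contradicting the sum–product lower bound once $|A|=|\cX\cdot\cY|$ is a large enough proportion of $p$; tracking the exponents shows the number of additive operations needed is a fixed small number, and absorbing the stray constants into the $\tfrac12$ is a routine but fiddly optimization. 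Since the theorem as stated is exactly Bourgain's estimate in the form used throughout this paper, I would in fact simply cite it as Lemma~\ref{lem:Bourg} and defer the explicit constant bookkeeping to the reference, noting only that $8$ and $\tfrac12$ are comfortably within what the method delivers.
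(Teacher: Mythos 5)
The paper does not prove this lemma at all: it is stated and then attributed verbatim to Bourgain's paper \cite{Bourg} with the one-line remark ``Lemma~\ref{lem:Bourg} is Bourgain's sum product estimate from~\cite{Bourg}.'' Your final sentence --- that you would simply cite it --- therefore matches the paper's treatment exactly, and on that point there is no discrepancy.

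The preceding two paragraphs of your proposal, however, are not a proof and should not be presented as one. You sketch a route via a qualitative sum--product theorem plus Pl\"unnecke--Ruzsa and a covering argument, and you candidly flag the constant bookkeeping as the delicate point; but that bookkeeping is precisely where the sketch would stall. A generic ``\,$\max\{|\cX+\cX|,|\cX\cX|\}\gg|\cX|^{1+\varepsilon}$\,'' statement has no hope of producing the clean, fully explicit constants $8$ and $\tfrac12$ in front of $\min\{|\cX||\cY|,p-1\}$ by a soft Pl\"unnecke iteration, because $\varepsilon$ there is tiny and ineffective; Bourgain's argument in \cite{Bourg} gets the explicit form directly, not by upgrading a qualitative bound. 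There is also a subtle issue in your step-1 reduction: deleting $0$ from $\cX$ changes $|\cX|$, and since the target inequality has $|\cX||\cY|$ on the right, ``changes by at most $1$'' is not automatically harmless when $|\cX|=2$; one has to check the small cases separately (which you gesture at but do not do). None of this ultimately matters, because the correct move --- and the paper's move --- is to treat the lemma as a black-box citation, but the exploratory derivation you outline is not a substitute for that citation and would need substantial work to become one.
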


\begin{lem}
\label{lem:Frei} For a sufficiently large $p$, let $\cX$ be a subset
of $\F_p$ such that $|\cX| < p/35$ and
$$
|2\cX|<\frac{12}{5}|\cX|-3.
$$
Then $\cX$ is contained in an arithmetic progression of at most
$|2\cX|-|\cX|+1$ terms.
\end{lem}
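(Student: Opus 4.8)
The plan is to transport the problem to the integers and apply Freiman's classical $3k-4$ theorem. Write $k=|\cX|$ and $|2\cX|=2k-1+b$; the hypothesis $|2\cX|<\frac{12}{5}k-3$ says exactly that $b<\frac{2}{5}k-2$, so in particular $|2\cX|<3k-4$, which is the range covered by Freiman's theorem, and the conclusion we are after is precisely that $\cX$ sits in an arithmetic progression of $|2\cX|-|\cX|+1=k+b$ terms. Note that $k+b<\frac{7}{5}k<\frac{p}{25}$, a length far below $p$; this slack is what will let us carry the conclusion back from $\Z$ to $\F_p$.

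The first step is rectification. By a rectification principle for sets of small doubling in prime fields (in the spirit of Bilu--Lev--Ruzsa and Green--Ruzsa), a set $\cX\subset\F_p$ with $|2\cX|\le K|\cX|$ and $|\cX|$ below a suitable constant multiple of $p$ is Freiman $2$-isomorphic to a set $\cX'\subset\Z$. Here $K<\frac{12}{5}$, and the constant $1/35$ in the hypothesis $|\cX|<p/35$ is tuned precisely to meet the cardinality threshold that rectification requires at this value of the doubling; the assumption that $p$ be sufficiently large absorbs the additive constants. Since $|2\cX'|=|2\cX|<3|\cX'|-4$, Freiman's $3k-4$ theorem places $\cX'$ inside an arithmetic progression of at most $|2\cX'|-|\cX'|+1=|2\cX|-|\cX|+1$ terms.

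It remains to carry this progression back to $\F_p$. Composing $\cX'$ with a dilation and a translation (both Freiman isomorphisms) we may assume $\cX'\subset\{0,\dots,L-1\}$ with $0\in\cX'$, $\gcd(\cX')=1$, and $L\le|2\cX|-|\cX|+1<\frac{7}{5}k$, so $\cX'$ omits fewer than $\frac{2}{5}k$ of the integers $0,\dots,L-1$ while still having $k$ elements; in particular $\cX'$ contains long runs of consecutive integers and many triples $m,m',m+m'\in\cX'$. Writing $\varphi:\cX'\to\cX$ for the isomorphism and $\psi(n)=\varphi(n)-\varphi(0)$, one has $\psi(n_1)+\psi(n_2)=\psi(n_3)+\psi(n_4)$ whenever $n_1+n_2=n_3+n_4$ in $\cX'$, and the density just noted forces $\psi(n)=\beta n$ for a single $\beta\in\F_p$ by an elementary second-difference argument. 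Hence $\cX=\{\varphi(0)+\beta n: n\in\cX'\}$ lies in an arithmetic progression modulo $p$ of at most $|2\cX|-|\cX|+1$ terms, which is the assertion of the lemma.

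I expect the main obstacle to be making the first step quantitative: verifying that rectification at doubling below $\frac{12}{5}$ is valid as soon as $|\cX|<p/35$, with $p$ large enough to swallow the lower-order terms. If one instead cites a rectification theorem in the stronger form in which the isomorphism already extends to the interval hulls, the transfer-back step becomes automatic; otherwise the linearity argument for $\psi$, though routine, must be run with a little care.
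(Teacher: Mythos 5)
The paper does not prove this lemma at all: it is cited verbatim as Freiman's theorem on sets with small doubling in $\Z/p\Z$, with the reference \cite[Theorem~2.11]{Nat}, so the intended ``proof'' is simply a pointer to the literature. Your proposal instead attempts to reconstruct the proof, and the structural plan you describe (rectify to $\Z$, apply the integer $3k-4$ theorem, transfer the arithmetic progression back) is indeed the skeleton of Freiman's own argument. But there is a genuine gap at the step you yourself flag as ``the main obstacle'': the rectification. The lemma's entire content is that a set in $\F_p$ with doubling below $12/5$ and cardinality below $p/35$ can be rectified to $\Z$; you cannot discharge this by appealing to general rectification principles ``in the spirit of Bilu--Lev--Ruzsa and Green--Ruzsa,'' because those theorems do not give the pair of constants $(12/5,\,1/35)$. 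The Bilu--Lev--Ruzsa result requires the \emph{diameter} of the set (not just its cardinality) to be small relative to $p$, which is not an available hypothesis here; and the Green--Ruzsa rectification for $\Z/p\Z$ at doubling constant $K$ demands $|\cX|$ far smaller than $p/35$ once $K$ is as large as $12/5$. So the sentence asserting that ``$1/35$ is tuned precisely to meet the cardinality threshold that rectification requires'' is exactly the claim that needs a proof, and it is left unproved.

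There is a smaller soft spot in the transfer-back step as well: a Freiman $2$-isomorphism $\varphi:\cX'\to\cX$ does not a priori extend to an affine map on the ambient progression, and the ``elementary second-difference argument'' that $\psi(n)=\beta n$ needs the functional equation $\psi(a)+\psi(b)=\psi(c)+\psi(d)$ to be leveraged on enough quadruples to pin down $\psi$ globally; density $>5/7$ in $\{0,\dots,L-1\}$ makes this plausible but it is not automatic and you do not carry it out. Since the lemma is a named classical theorem, the clean route is the one the paper takes: cite it. If you do want a self-contained proof, you must either reproduce Freiman's quantitative rectification with these specific constants or replace the lemma by a version whose constants match a rectification theorem you can actually quote, and then check that the weaker constants still suffice for the application in the proof of Theorem~\ref{thm:main}.
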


Lemma~\ref{lem:Bourg} is Bourgain's sum product estimate
from~\cite{Bourg}.  Lemma~\ref{lem:Frei} is Freiman's result on
additive structure of sets with small doubling (see, for
example,~\cite[Theorem 2.11]{Nat}).

\begin{lem}
\label{lem:Positive PropAP} Let $\cX$ be a subset of $\F_p$ and $m$
be a positive integer such that
$$
|m\cX|<\min\{33 m|\cX|, \frac{p-1}{8}\}.
$$
Assume that $m\cX$ is contained in an arithmetic progression of at
most $2|m\cX|$ terms. Then $\cX$ is contained in an arithmetic
progression of at most
$132|\cX|$ terms.
\end{lem}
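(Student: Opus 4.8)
The plan is to run a Freiman-type argument on the set $m\cX$ and then descend to $\cX$. First I would apply Lemma~\ref{lem:Frei} to the set $m\cX$. To do this I must check its two hypotheses. The size hypothesis $|m\cX|<p/35$ follows from $|m\cX|<(p-1)/8$ together with the assumption $p$ large (or, if one wants it clean, one replaces the constant $1/8$ by a slightly smaller one; since the constants here are not optimized this is harmless). The doubling hypothesis requires $|2(m\cX)|<\tfrac{12}{5}|m\cX|-3$. Here is where the other half of the hypothesis is used: $2(m\cX)=(2m)\cX$, and $(2m)\cX$ is again a sumset of $\cX$ with itself $2m$ times. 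I would like to say $|(2m)\cX|$ is not much bigger than $|m\cX|$. The natural tool is the Plünnecke--Ruzsa inequality, or more elementarily the submultiplicativity/Ruzsa covering estimates for iterated sumsets: if $|m\cX|\le K m|\cX|$-type control holds, then $|(2m)\cX|$ is comparable to $|m\cX|$. Concretely, the bound $|m\cX|<33m|\cX|$ plays the role of "small doubling relative to $|\cX|$," and from it one extracts $|(2m)\cX| < \tfrac{12}{5}|m\cX|$ via a counting/covering argument (for instance, cover $m\cX$ by few translates of $\tfrac{m}{2}\cX$ or iterate the triangle inequality for Ruzsa distance). I expect this estimate to be the main obstacle: getting the explicit constant $12/5$ out of the input constant $33$ requires a careful but routine chain of Plünnecke/Ruzsa-type inequalities, and one must be mindful that $m$ may be odd, in which case one works with $\lfloor m/2\rfloor$ and $\lceil m/2\rceil$ and absorbs the discrepancy into the constants.

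Once Lemma~\ref{lem:Frei} applies, $m\cX$ lies in an arithmetic progression $\cR$ of length at most $|2(m\cX)|-|m\cX|+1 \le \tfrac{7}{5}|m\cX|$. Combined with the hypothesis that $m\cX$ is already contained in an arithmetic progression of at most $2|m\cX|$ terms, we now know $m\cX$ sits in a short progression whose length is a bounded multiple of $|m\cX|$ — and in particular a bounded multiple of $m|\cX|$, since $|m\cX|<33m|\cX|$ forces $|m\cX|/m < 33|\cX|$, hence the progression $\cR$ has length $< \tfrac{7}{5}\cdot 33\,m|\cX| = \tfrac{231}{5} m|\cX|$. Write $\cR = \{u + i v : 0\le i < \ell\}$ with $\ell < \tfrac{231}{5} m|\cX|$.

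The final step is to push this structure from $m\cX$ down to $\cX$. The key observation is that $m\cX \supseteq x_0 + (m-1)x_0 + \cdots$; more usefully, for any fixed choice of $m-1$ elements $y_1,\dots,y_{m-1}\in\cX$ (say all equal to a fixed $x^\*\in\cX$), the translate $(m-1)x^\* + \cX$ is contained in $m\cX \subseteq \cR$. Hence $\cX$ itself is contained in the translated progression $\cR - (m-1)x^\*$, which has the same common difference $v$ and the same length $\ell$. So $\cX$ lies in an arithmetic progression of length $\ell < \tfrac{231}{5}m|\cX|$. This is not yet the claimed bound $132|\cX|$ — it still carries the factor $m$. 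To remove it, I would argue that $\cX$ cannot occupy a progression of length $\gg |\cX|$ while keeping $|m\cX|$ small: if $\cX \subseteq \{w + jv : j\in J\}$ with $J\subseteq\{0,\dots,\ell-1\}$ and $|J|=|\cX|$, then $m\cX \subseteq \{mw + jv : j\in mJ\}$ where $mJ$ is the $m$-fold sumset of $J$ in $\Z$ (modulo the wraparound caused by $v$'s order in $\F_p$, which is controlled since $\ell < p$). A set $J$ of $|\cX|$ integers spanning an interval of length $\ell$ has $|mJ|\ge \min(\ell, m\cdot\text{(something)})$; more precisely, using that $m\cX$ lies in a progression of length $\le 2|m\cX|$ with the \emph{same} difference $v$ (which we can arrange by tracking $v$ through the argument), we get $m\cdot\mathrm{diam}(J) \le \mathrm{diam}(mJ) \le \ell_{m\cX} \le 2|m\cX| < 66 m|\cX|$, so $\mathrm{diam}(J) < 66|\cX|$, i.e. $\cX$ is contained in a progression of at most $66|\cX| + 1 < 132|\cX|$ terms (for $|\cX|\ge 1$, with room to spare). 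The one subtlety to handle carefully is the reduction modulo $p$: one must ensure the progression $\cR$ has length $< p$ so that "$m\cX$ in a progression in $\F_p$" genuinely lifts to "$mJ$ in an interval in $\Z$" without collapse — and this is exactly guaranteed by $|m\cX| < (p-1)/8$, which keeps all the relevant progressions of length $o(p)$.
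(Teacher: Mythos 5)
Your proposal contains two genuine gaps, one fatal and one at the exact point where the real work of the lemma lies.

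\textbf{The Freiman step cannot go through.} You propose to apply Lemma~\ref{lem:Frei} to $\cX'=m\cX$, which requires $|2\cX'|<\frac{12}{5}|\cX'|-3$, i.e.\ $|2m\cX|<\frac{12}{5}|m\cX|-3$. But by the Cauchy--Davenport theorem (Lemma~\ref{lem:CauchyDav}), $|2m\cX|=|m\cX+m\cX|\ge 2|m\cX|-1$ (the truncation at $p$ is irrelevant here since $2|m\cX|-1<(p-1)/4<p$). For this to be less than $\frac{12}{5}|m\cX|-3$ one would need $\frac{2}{5}|m\cX|<2$, i.e.\ $|m\cX|<5$. So for $|m\cX|\ge 5$ the doubling hypothesis of Lemma~\ref{lem:Frei} is \emph{never} satisfied by $m\cX$, and no amount of Pl\"unnecke--Ruzsa chaining can recover it, since the claimed bound $|2m\cX|<\frac{12}{5}|m\cX|$ is false. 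Fortunately this step is also unnecessary: the lemma already \emph{assumes} that $m\cX$ is contained in an arithmetic progression of at most $2|m\cX|$ terms, which is all that is needed downstream; the Freiman lemma is applied in the paper \emph{before} invoking this lemma (in the proof of Theorem~\ref{thm:main}), precisely to establish that hypothesis for a dyadic sumset.

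\textbf{The wraparound in the diameter step is not handled.} Your key inequality is $m\cdot\mathrm{diam}(J)=\mathrm{diam}(mJ)\le \ell_{m\cX}\le 2|m\cX|$, but the middle inequality requires that the reduction of the integer set $mJ$ modulo $p$ be wraparound-free; since $\mathrm{diam}(mJ)=m\cdot\mathrm{diam}(J)$ can a priori be as large as roughly $m\cdot 2|m\cX|$, which can vastly exceed $p$, the bound $|m\cX|<(p-1)/8$ does \emph{not} by itself rule out collapse. This is precisely the crux of the lemma, and the paper resolves it by an induction on $i=1,\dots,m$: fix $d=x_1-x_2$ with $|d|\le(p-1)/2$ for $x_1,x_2\in\cX$, note $id\bmod p\in m\cX-m\cX\subset[-2|m\cX|,2|m\cX|]\pmod p$, and lift the congruence to an equality step by step using the growth control $|(i+1)d|\le 2|id|\le 4|m\cX|\le(p-1)/2$. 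This geometric step-size control is exactly what your argument is missing; once one has $md\in[-2|m\cX|,2|m\cX|]$ as an integer, the bound $|d|\le 2|m\cX|/m<66|\cX|$ follows immediately from the hypothesis $|m\cX|<33m|\cX|$. Your translation observation $\cX\subset\cR-(m-1)x^{\ast}$ is a nice auxiliary remark, but the paper dispenses with it by bounding pairwise differences in $\cX$ directly.
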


\begin{proof}
By a suitable dilation of the set $\cX$, we can assume that $m\cX$
forms at least a half of an arithmetic progression with difference
equal to $1$. In particular, the diameter of this progression is not
greater than $2|m\cX|$. Thus,
$$
m\cX-m\cX\subset[-2|m\cX|, 2|m\cX|]\pmod p.
$$

Let $x_1,x_2\in \cX$ and $x_1-x_2=d\pmod p$ with $|d|\le (p-1)/2$.
It suffices to prove that $|d|<66|\cX|.$ Observe that all the
elements $id\pmod p$, $1\le i\le m$, are contained in the set
$m\cX-m\cX$. Thus,
$$
\{d, 2d,\ldots, md\}\pmod p \subset[-2|m\cX|, 2|m\cX|]\pmod p.
$$
It then follows that we actually have $id\in [-2|m\cX|, 2|m\cX|]$
for all $1\le i\le m$. Indeed, this is trivial for $i=1$. Assume
that for some $1\le i\le m-1$ we have $id\in [-2|m\cX|, 2|m\cX|]$
and let
\begin{equation}
\label{eqn:ind i d}
(i+1)d\equiv z_{i+1}\pmod p
\end{equation}
for some $z_{i+1}\in [-2|m\cX|, 2|m\cX|]$. Since, by the induction
hypothesis,
$$
|(i+1)d|\le 2|id|\le 4|m\cX|\le (p-1)/2,
$$
the congruence~\eqref{eqn:ind i d} is converted to an equality, as
desired.

In particular, $md\subset[-2|m\cX|, 2|m\cX|],$ implying $|d|\le
\frac{2}{m}|m\cX|<66|\cX|.$
\end{proof}

The following statement is known as the Cauchy-Davenport theorem
(see, for example,~\cite[Theorem 2.2]{Nat}).

\begin{lem}
\label{lem:CauchyDav} For any nonempty subsets $\cX$ and $\cY$ of
$\F_p$ the following bound holds:
$$
|\cX+\cY|\ge \min\{p, |\cX|+|\cY|-1\}.
$$
\end{lem}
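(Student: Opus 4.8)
The plan is to establish the Cauchy--Davenport inequality via the classical Dyson $e$-transform, which requires nothing beyond elementary set manipulation. First dispose of the large range: if $|\cX|+|\cY|>p$, then for every $g\in\F_p$ the sets $\cX$ and $g-\cY$ have sizes summing to more than $p$, so they meet; writing $x=g-y$ gives $g\in\cX+\cY$, hence $\cX+\cY=\F_p$ and the claimed bound holds with equality. So assume henceforth $|\cX|+|\cY|\le p$, in which case $\min\{p,|\cX|+|\cY|-1\}=|\cX|+|\cY|-1$ and the goal is $|\cX+\cY|\ge|\cX|+|\cY|-1$.

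For subsets $\cU,\cV\subseteq\F_p$ and $e\in\F_p$, the Dyson transform replaces $(\cU,\cV)$ by $(\cU_e,\cV_e)$ with $\cU_e=\cU\cup(\cV+e)$ and $\cV_e=(\cU-e)\cap\cV$. Inclusion--exclusion gives $|\cU_e|+|\cV_e|=|\cU|+|\cV|$, while $\cV_e\subseteq\cV$ and $\cV_e\subseteq\cU-e$ give $\cU_e+\cV_e\subseteq\cU+\cV$; moreover $\cV_e\ne\emptyset$ exactly when $e\in\cU-\cV$. Now among all pairs $(\cX',\cY')$ of nonempty subsets of $\F_p$ with $|\cX'|+|\cY'|=|\cX|+|\cY|$ and $\cX'+\cY'\subseteq\cX+\cY$, fix one with $|\cY'|$ minimal. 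If $|\cY'|=1$ then $|\cX+\cY|\ge|\cX'+\cY'|=|\cX'|=|\cX|+|\cY|-1$ and we are done.

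Suppose instead $|\cY'|\ge2$. It suffices to find $e\in\cX'-\cY'$ with $\cY'+e\not\subseteq\cX'$: then $\cY'_e=(\cX'-e)\cap\cY'$ is a nonempty proper subset of $\cY'$, and the transformed pair $(\cX'_e,\cY'_e)$ again has coordinate-sum $|\cX|+|\cY|$ and sum-set inside $\cX+\cY$, contradicting minimality. If no such $e$ existed, then $\cY'+(x'-y')\subseteq\cX'$ for all $x'\in\cX'$, $y'\in\cY'$, i.e.\ $\cX'+\cY'\subseteq\cX'+y'$ for each $y'\in\cY'$; since the reverse inclusion is automatic, $\cX'+\cY'=\cX'+y'$ for every $y'\in\cY'$, so $\cX'$ is invariant under translation by $\cY'-\cY'$ and hence by the subgroup this set generates. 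As $|\cY'|\ge2$, that subgroup is nontrivial, and $p$ being prime it equals $\F_p$; thus $\cX'=\F_p$ and $|\cX'|+|\cY'|>p$, contradicting $|\cX|+|\cY|\le p$. Hence $|\cY'|=1$, and the proof is complete.

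The one delicate point is this last reduction --- that the extremal pair must have $|\cY'|=1$ --- which is precisely where primality of $p$ is used essentially (over a composite modulus $\cY'$ could be a proper nontrivial subgroup, and indeed the inequality itself then fails). A shorter alternative, at the cost of an external tool, is the polynomial method: if $|\cX+\cY|\le|\cX|+|\cY|-2<p$, pick $\cC\supseteq\cX+\cY$ with $|\cC|=|\cX|+|\cY|-2$, set $f(x,y)=\prod_{c\in\cC}(x+y-c)$, note that the coefficient of $x^{|\cX|-1}y^{|\cY|-1}$ is $\binom{|\cX|+|\cY|-2}{|\cX|-1}\not\equiv0\pmod p$, and apply the Combinatorial Nullstellensatz to get $x\in\cX$, $y\in\cY$ with $f(x,y)\ne0$, contradicting $x+y\in\cC$.
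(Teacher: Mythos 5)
The paper itself offers no proof of this lemma; it simply cites it as the Cauchy--Davenport theorem with a pointer to Nathanson's book, so there is no internal argument to compare against. Your proof via the Dyson $e$-transform is correct and complete: the dispatch of the case $|\cX|+|\cY|>p$ by pigeonhole, the verification that the transform preserves the coordinate sum and shrinks the sum-set, the extremal choice minimizing $|\cY'|$, and the final use of primality (so that a nontrivial stabilizer subgroup must be all of $\F_p$) are all in order. This is one of the two standard elementary proofs, the other being the Combinatorial Nullstellensatz argument you sketch at the end; either would serve, and both are self-contained in a way the paper's citation is not. One could shave a line by noting that instead of running the extremal argument over pairs with sum-set contained in $\cX+\cY$, one can induct directly on $\min\{|\cX|,|\cY|\}$, but this is purely cosmetic.
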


We will also need the following simple statement.

\begin{lem}
\label{lem:Dir} Let $0<\delta<1$ and let $L$ be an integer with
$L>\delta^{-1}$. Assume that the set $\cX\subset \{r+1,r+2,\ldots,
r+L\}$ is such that $|\cX|\ge\delta L$. Then for any positive
integer $k$ with $\delta^{-k}<L$ there exist elements $x_1,x_2\in
\cX$ such that
$$
\frac{\delta^{-(k-1)}}{2}\le x_1-x_2<2\delta^{-k}.
$$
\end{lem}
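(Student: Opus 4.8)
The plan is to use a dyadic pigeonhole argument on the ratios of differences within $\cX$, exploiting that $\cX$ is a dense subset of a short interval. First I would translate so that $r=0$, so $\cX\subset\{1,2,\ldots,L\}$ with $|\cX|\ge\delta L$. Arrange the elements of $\cX$ in increasing order as $x^{(1)}<x^{(2)}<\cdots<x^{(m)}$ with $m=|\cX|\ge \delta L$. The consecutive gaps $g_j=x^{(j+1)}-x^{(j)}$ for $1\le j\le m-1$ are positive integers summing to $x^{(m)}-x^{(1)}<L$. Hence the average gap is at most $L/(m-1)\le L/(\delta L-1)$, which is slightly more than $\delta^{-1}$; a minor adjustment (using $L>\delta^{-1}$, so $\delta L-1>\delta L(1-\delta^{-1}/L)$, or more cleanly just noting $m-1\ge \delta L/2$ when $L$ is not tiny, or arguing directly) shows that the smallest gap $g=\min_j g_j$ satisfies $1\le g< 2\delta^{-1}$. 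So there always exist two elements of $\cX$ whose difference lies in $[1,2\delta^{-1})$; this handles the base case $k=1$ of the statement (the lower bound $\delta^{0}/2=1/2$ being automatic since the difference is a positive integer).

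For general $k$ with $\delta^{-k}<L$, I would iterate this idea scale by scale. Suppose toward a contradiction that no pair $x_1,x_2\in\cX$ has $x_1-x_2$ in the window $[\tfrac12\delta^{-(k-1)},2\delta^{-k})$. Consider the set of \emph{all} differences $D=\{x-x'>0: x,x'\in\cX\}$; these are positive integers bounded by $L-1$. The key point is that the forbidden window $[\tfrac12\delta^{-(k-1)},2\delta^{-k})$ has multiplicative width $4\delta^{-1}$, which is wide enough that avoiding it forces a genuine gap in the attainable differences. Concretely, let $t$ be the largest element of $\cX$ that is less than $\delta^{-(k-1)}/2$ above $x^{(1)}$ — I would partition $\{1,\ldots,L\}$ into consecutive blocks of length roughly $\delta^{-(k-1)}/2$ and count: there are about $2\delta^{k-1}L$ such blocks, and since $|\cX|\ge\delta L$, on average each block meets $\cX$ in about $\delta^{2-k}L/(2) \ge \delta^{-(k-2)}/2$ points (using $L\ge\delta^{-k}$), so in particular $\cX$ meets plenty of blocks and two consecutive occupied blocks are within distance $<\delta^{-(k-1)}$; iterating the base-case argument inside a union of two adjacent blocks (an interval of length $<\delta^{-(k-1)}$ containing at least $\delta^{-(k-2)}$ points of $\cX$, i.e. still a $\delta$-dense subset of a short interval) produces by induction on $k$ two points with difference in $[\tfrac12\delta^{-(k-2)},2\delta^{-(k-1)})$, and then a further step promotes such a difference into the target window by adding an appropriate number of copies — but this last "promotion" is exactly where care is needed.

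The cleanest route, and the one I would actually write, is a direct induction on $k$ that simultaneously rescales: having found (by the $k-1$ case applied to a suitable $\delta$-dense sub-progression-interval of length $<\delta^{-(k-1)}$) two elements $y_1,y_2\in\cX$ with $d_0:=y_1-y_2\in[\tfrac12\delta^{-(k-2)},2\delta^{-(k-1)})$, I would then look at the residues of $\cX$ along the progression $y_2, y_2+d_0, y_2+2d_0,\ldots$: since $\cX$ is $\delta$-dense in an interval of length $L$ and $d_0<2\delta^{-(k-1)}\le 2\delta^{-1}L \cdot \delta^{k-2}$... in fact $\lfloor L/d_0\rfloor \ge \delta L$ terms of this progression lie in $\{1,\ldots,L\}$, and $\cX$ must contain two of them, say $y_2+i d_0$ and $y_2+j d_0$ with $i<j$; the difference $(j-i)d_0$ is a multiple of $d_0$, and choosing the right multiple lands in $[\tfrac12\delta^{-(k-1)},2\delta^{-k})$ provided $d_0$ itself is already of size comparable to $\delta^{-(k-1)}$, which it is. I expect the main obstacle to be the bookkeeping in this last step: making the interval-length and density bounds match up cleanly across the rescaling so that each application of the $k=1$ case (or of the induction hypothesis) is legitimate — in particular verifying that the relevant sub-intervals still have length $>\delta^{-1}$ and that the density does not degrade below $\delta$. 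Once the constants are chosen to absorb the $O(1)$ slack coming from $L>\delta^{-1}$ (rather than $L\gg\delta^{-1}$), the factor $2$ in the bounds $\tfrac12\delta^{-(k-1)}$ and $2\delta^{-k}$ is exactly the room one needs, so the argument closes.
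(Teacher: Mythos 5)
Your proposal is far more elaborate than the argument actually needed, and it has a real gap at the crucial step, which you yourself flag as ``where care is needed.''

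The concrete problem is the ``promotion'' step. After obtaining, by induction, two points $y_1,y_2\in\cX$ with $d_0=y_1-y_2\in[\tfrac12\delta^{-(k-2)},2\delta^{-(k-1)})$, you propose to look at the progression $y_2,y_2+d_0,y_2+2d_0,\dots$ and assert that $\cX$ ``must contain two of them.'' This does not follow: $\delta$-density of $\cX$ in $\{1,\dots,L\}$ gives no control over the intersection of $\cX$ with a \emph{specific} arithmetic progression of step $d_0$; that progression has only about $L/d_0$ terms in the interval and $\cX$ could miss all of them except $y_1,y_2$. The supporting estimate $\lfloor L/d_0\rfloor\ge\delta L$ is also false in general, since $d_0\ge\tfrac12\delta^{-(k-2)}$, so $L/d_0\le 2\delta^{k-2}L$, which for $k\ge 3$ is \emph{smaller} than $\delta L$, not larger. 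And even granting two AP hits at $y_2+id_0$ and $y_2+jd_0$, you would still need to choose $j-i$ so that $(j-i)d_0$ falls in the narrow target window $[\tfrac12\delta^{-(k-1)},2\delta^{-k})$, which is not automatic since $d_0$ can be as large as $2\delta^{-(k-1)}$, i.e.\ already at the top end. So the step-by-step rescaling does not close.

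The statement has a much more direct one-shot pigeonhole proof, which is what the paper does: after translating to $\cX\subset\{1,\dots,L\}$, split the interval into $\lceil\delta^{k}L\rceil-1$ consecutive blocks, each of length
$$
\frac{L-1}{\lceil\delta^{k}L\rceil-1}<\frac{L}{\delta^{k}L/2}=2\delta^{-k}.
$$
Since $|\cX|\ge\delta L$, one block $\cR$ contains more than $\delta L/(\delta^{k}L)=\delta^{-(k-1)}$ elements of $\cX$. Taking $x_1,x_2$ to be the largest and smallest elements of $\cR\cap\cX$ gives $x_1-x_2<2\delta^{-k}$ from the block length, and $x_1-x_2\ge\max\{1,\delta^{-(k-1)}-1\}\ge\tfrac12\delta^{-(k-1)}$ from the count, finishing the proof in one step. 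Note the block length is on the order of $\delta^{-k}$, not $\delta^{-(k-1)}/2$ as in your block-counting sketch; choosing the coarser partition is exactly what makes the lower and upper bounds come out simultaneously without any induction, rescaling, or promotion step. Your base case $k=1$ is fine and is essentially the same computation, but the generalization should replicate it at scale $\delta^{-k}$, not iterate.
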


\begin{proof}
We split the interval $[r+1,r+L]$ into $\lceil\delta^{k} L\rceil-1$
subintervals of length
$$
\frac{L-1}{\lceil\delta^{k} L\rceil-1}<\frac{L}{(\delta^{k} L/2)} =2\delta^{-k}.
$$
From the pigeon-hole principle, one of this intervals (denote it by
$\cR$) contains at least
$$
\frac{|\cX|}{\lceil\delta^{k} L-1\rceil}>\frac{\delta L}{\delta^{k} L}=\delta^{-(k-1)}
$$
elements of $\cX$. Therefore, if $x_1$ and $x_2$ are the largest and
the smallest elements of $\cR\cap \cX$ then
$$
2\delta^{-k}>x_1-x_2\ge\max\{1, \delta^{-(k-1)}-1\}\ge \frac{\delta^{-(k-1)}}{2}.
$$

\end{proof}

\section{Proof of Theorem~\ref{thm:main}}

Let $N<cp$ and assume that the interval
$$
\cI=\{n+1,n+2,\ldots,n+N\}\pmod p
$$
is such that $\cI_0=\cI\setminus\{0\}=\cA\cB$ for some subsets
$\cA\subset \F_p,\, \cB\subset \F_p$ with $\min\{|\cA|,|\cB|\}\ge
2$. Here $c$ is a small positive constant (the smallness of the
constant $c$ is at our disposal).

Let $k$ be the largest integer such that $2^k N=2^k|\cI|<p/33$.
Observe that for any positive integer $m$
$$
(m\cA)\cB\subset m\cA\cB\subset m\cI.
$$
Hence, since $\cI$ is an interval, from Lemma~\ref{lem:Bourg} we
get, for any nonnegative integer $\nu\le k$,
\begin{eqnarray*}
\min\{p-1,|2^{\nu}\cA||\cB|\}&\le& 2|8(2^{\nu}\cA)\cB-8(2^{\nu}\cA)\cB|  \\ \le
2|2^{\nu+3}\cI-2^{\nu+3}\cI| &\le& 32\times 2^{\nu} (|\cI|-1)+2  < 33\times 2^{\nu} |\cA||\cB|.
\end{eqnarray*}
In particular, if
$$p-1\le2|8(2^{\nu}\cA)\cB-8(2^{\nu}\cA)\cB|,$$
then we get
$$p-1\le32\times 2^{\nu} (|\cI|-1)+2<32\times p/33.$$
This contradiction shows that actually
$$|2^{\nu}\cA||\cB|\le2|8(2^{\nu}\cA)\cB-8(2^{\nu}\cA)\cB|.$$
Therefore,
$$
|2^{\nu}\cA||\cB|<33\times 2^{\nu} |\cA||\cB|;\quad |\cA||\cB|\le 2|8\cI-8\cI|< 32N\le 32cp.
$$
Thus,
\begin{equation}
\label{eqn:kA<}
|2^{\nu}\cA|< 33\times 2^{\nu} |\cA|  {\rm{\quad for \quad any \quad}} \nu=0,1,2\ldots, k
\end{equation}
and we also have
\begin{equation}
\label{eqn:32N}
N-1\le |\cA||\cB|< 32cp.
\end{equation}

Since $c$ is small, $k$ is large.  From~\eqref{eqn:kA<} we get
$$
\prod_{\ell=4}^{k-1}\frac{|2^{\ell+1}\cA|}{|2^{\ell}\cA|}=\frac{|2^k \cA|}{|2^4\cA|}<33\times2^k.
$$
Hence, since $k$ is large enough, there exists $4\le \ell<k$ such
that
\begin{equation}
\label{eqn:2 ell A<2.1}
|2^{\ell+1}\cA|<2.1\times |2^{\ell}\cA|<\frac{12}{5}|2^{\ell} \cA|-3.
\end{equation}
Here we also used the inequality $|2^{\ell}\cA|\ge |2^{4}\cA|> 10$
which follows from Lemma~\ref{lem:CauchyDav}.

Since $|2^{\ell+1}\cA|<p/32$, Lemma~\ref{lem:CauchyDav} implies that
$|2^{\ell}\cA|<p/35$. Then applying Lemma~\ref{lem:Frei} with
$\cX=2^{\ell}\cA$ we get that the set $2^{\ell} \cA$ forms at least
a half of an arithmetic progression. Therefore,
inequality~\eqref{eqn:kA<} with $\nu=\ell$ and
Lemma~\ref{lem:Positive PropAP} implies that the set $\cA$ is
contained in an arithmetic progression of at most $132|\cA|$ terms.
By completing the progression, we can assume that $\cA$ is contained
in an arithmetic progression of $132|\cA|$ terms.

Analogously, the set $\cB$ is contained in an arithmetic progression
of $132|\cB|$ terms.

\bigskip

We recall that $\cA\cB=\cI\setminus\{0\}$ and, by~\eqref{eqn:32N},
$|\cA||\cB|<32cp$, where $c$ is a small positive constant. We can
dilate $\cA$ and $\cB$ and assume, without loss of generality, that
for some integer $r$
$$
\cA\subset \{r+1,r+2,\ldots, r+132|\cA|\}\pmod p.
$$

We shall now prove that for any element $b\in B$ there are integers
$u$ and $v$ such that
$$
|u|\le 264^2|\cB|; \quad 1\le v\le 264; \quad  b\equiv \frac{u}{v}\pmod p.
$$

Let $K$ be the integer defined from
$$
|\cA|\le 132^K<132|\cA|.
$$
We associate the elements of $\cA$ with their representatives from
the interval $\{r+1,r+2,\ldots, r+132|\cA|\}$. Note that for any
$a_1\in \cA, a_2\in \cA$ we have
$$
(a_1-a_2)b\in \cA\cB-\cA\cB\subset \cI-\cI \subset [-N+1, N-1] \pmod p.
$$
It then follows from Lemma~\ref{lem:Dir} with $\delta=1/132$ that
$$ b\equiv
\frac{u_1}{v_1}\equiv \frac{u_2}{v_2}\equiv \ldots \equiv
\frac{u_K}{v_K}\pmod p,
$$
for some integers  $u_1,\ldots, u_K, v_1,\ldots,v_K$ with
$$
|u_j|<N;\quad \frac{132^{j-1}}{2}\le v_j< 2\times 132^j.
$$
Moreover, we can assume that $\gcd(u_1,v_1)=1.$

We claim that for any $j\in \{1,2,\ldots, K\}$ there exists an
integer $t_j$ such that
\begin{equation}
\label{eqn:uj=tju1}
u_j=t_ju_1;\quad v_j=t_jv_1.
\end{equation}
We prove this by induction on $j.$ The claim is trivial for $j=1.$
Assume that~\eqref{eqn:uj=tju1} is true for some $1\le j\le K-1$.
Then from
$$
\frac{132^{j-1}}{2}\le v_j=t_jv_1\le 264t_j
$$
we have $t_j\ge 132^{j-1}/528$ and therefore
$$
|u_1|=\frac{|u_j|}{t_j}< \frac{N}{t_j}\le \frac{528 N}{132^{j-1}}.
$$
Next, we have
$$
u_1v_{j+1}\equiv v_1 u_{j+1}\pmod p.
$$
The absolute value of the left  hand side is bounded by
$$
\frac{528N}{132^{j-1}}\times 2\times 132^{j+1}\le 1056\times 132^2 cp\le p/3.
$$
The absolute value of the right hand side is bounded by $264N< p/3$.
Thus, our congruence is converted to the equality
$$
u_1v_{j+1}=v_1 u_{j+1}.
$$
Since $\gcd(u_1,v_1)=1$, there is an integer $t_{j+1}$ such that
$$
u_{j+1}=t_{j+1}u_1;\quad v_{j+1}=t_{j+1}v_1.
$$

Thus,~\eqref{eqn:uj=tju1} holds for all $j=1,2\ldots,K$. In
particular, for $j=K$ we have
$$
u_K=t_K u_1;\quad v_K=t_Kv_1.
$$
Therefore,
$$
t_K=\frac{v_K}{v_1}\ge \frac{132^{K-1}}{528}\ge \frac{|\cA|}{264^2}
$$
implying
$$
|u_1|=\frac{|u_K|}{t_K}\le \frac{N-1}{t_K}\le \frac{264^2(N-1)}{|\cA|}\le 264^2|\cB|.
$$
Since we also have $1\le v_1\le 264$, our claim on the structure of
$b\in \cB$ follows from $b\equiv u_1/v_1\pmod p$.

Denote by $\cA'$ and $\cB'$ the dilations of $\cA$ and $\cB$ defined
from
$$
\cA'=\{(264!)^{*}a;\, a\in \cA\};\quad \cB'=\{264!b;\, b\in \cB\}.
$$
We have
$$
\cA'\cB'=\cA\cB=\cI\setminus\{0\}.
$$
Furthermore,
$$
\cB'\subset \{n \in \Z;\, -266!|\cB'|\le n\le 266!|\cB'|-1\}\pmod p.
$$

We shall prove that for any $a\in \cA'$ there are integers $u',v'$
such that
$$
|u'|\le (267!)^2 |\cA'|;\quad 1\le v'\le 267!;\quad a\equiv \frac{u'}{v'}\pmod p.
$$

Let $K'$ be the integer defined from
$$
|\cB'|\le (2\times 266!)^{K'}<(2\times 266!)|\cB'|.
$$
Let $a\in \cA'$. We note that for any $b_1\in \cB', b_2\in \cB'$ we
have
$$
(b_1-b_2)a\in \cA'\cB'-\cA'\cB'\subset \cI-\cI \subset [-N+1, N-1] \pmod p.
$$
As before, from Lemma~\ref{lem:Dir} with $\delta=1/(2\times 266!)$
it follows that
$$
a\equiv
\frac{u'_1}{v'_1}\equiv \frac{u'_2}{v'_2}\equiv \ldots \equiv
\frac{u'_{K'}}{v'_{K'}}\pmod p,
$$
for some integers  $u'_1,\ldots, u'_K, v'_1,\ldots,v'_K$ with
$$
|u'_j|< N;\quad \frac{(2\times 266!)^{j-1}}{2}\le v'_j< 2\times (2\times 266!)^j;\quad \gcd(u'_1,v'_1)=1.
$$
Exactly as before, it follows by induction on $j$, that for any
$j\in\{1,2,\ldots, K'\}$ there is an integer $t'_j$ such that
$$
u'_j=t'_ju'_1;\quad v'_j=t'_jv'_1.
$$
In particular, taking $j=K'$ we get that
$$
t'_{K'}=\frac{v'_{K'}}{v_1'}\ge \frac{(2\times 266!)^{K'-1}}{4\times 266!}\ge \frac{|\cB'|}{(267!)^2}
$$
Thus,
$$
|u_1'|=\frac{|u'_{K'}|}{t_{K'}}\le\frac{ (267!)^2 (N-1)}{|\cB'|}\le(267!)^2 |\cA'|.
$$
Our claim on the structure of $a\in\cA'$ follows from $a\equiv
u_1'/v_1'\pmod p.$

Let now $\cA''$ is the dilation of $\cA'$ defined as
$$
\cA''=\{(267!)! a;\, a\in \cA'\}.
$$
Since $\cA'\cB'=\cI\setminus\{0\}$, it follows that
$\cA''\cB'=\cP\setminus \{0\}$ for some arithmetic progression
$\cP\subset\F_p.$ Note that now we have
$$
\cA''\subset \{n\in\Z; \, |n|\le (268!)!|\cA|\} \pmod p.
$$
Let
$$
\cA'''\subset \{n\in\Z; \, |n|\le (268!)!|\cA|\},\quad
\cB''\subset \{n \in \Z;\, |n|\le 266!|\cB'|\}
$$
be such that
\begin{equation}
\label{eqn:A'''B''}
\cA''=\cA'''\pmod p,\quad \cB'=\cB''\pmod p.
\end{equation}
Then either $\cA'''\cB''$ or $\cA'''\cB''\cup\{0\}$ is a set
$\{x_1,x_2,\ldots,x_{N'}\},$ with integers $x_i$ satisfying
$|x_i|\le (269!)!cp<0.1 p$ and
$$
x_{i+2}-x_{i+1}\equiv x_{i+1}-x_{i}\pmod p; \quad i=1,2,\ldots, N'-2.
$$
Then the congruence is converted to the equality
$$
x_{i+2}-x_{i+1}= x_{i+1}-x_{i}; \quad i=1,2,\ldots, N'-2.
$$
Thus, we have that either $\cA'''\cB''$ or $\cA'''\cB''\cup\{0\}$ is
an arithmetic progression of integers. Since $|A'''|\ge 2,\,
|B''|\ge 2,$ we can apply Lemma~\ref{lem:Rationals}. It follows that
there exists rational numbers $r, r_1,r_2$ such that either
$$
\cA'''=\{-r_1,2r_1\},\quad \cB''=\{-r_2,2r_2\},
$$
or one of the sets $\cA'''$ or $\cB''$ coincides with the set
$\{-r,r\}$. In the latter case~\eqref{eqn:A'''B''} implies that
either $\cA''$ or $\cB'$ coincides with the set $\{-r,r\}\pmod p$
and the result follows from the fact that $\cA$ and $\cB$ are the
dilations of $\cA''$ and $\cB'$ correspondingly.

In former case, for some $h\in\F_p$ we have
$$
\cI\setminus\{0\}=\{-2h,h,4h\}.
$$
It follows that $\{0\}\not\in \cI$ and we get, for some $h_1,$
$$
\cI=\{h_1(3^*-1),h_13^*,h_1(3^*+1)\}\pmod p.
$$
From this it follows that either $h_1(3^*-1)$ and $h_13^*$ or
$h_13^*$ and $h_1(3^*+1)$ are consecutive elements of $\cI$. Thus,
$h_1\in \{-1, 1\}\pmod p$. This finishes the proof of
Theorem~\ref{thm:main}.

\section{Proof of Theorems~\ref{thm:p=1 mod 4} and~\ref{thm:k1k2}}

The proof of Theorems~\ref{thm:p=1 mod 4} and~\ref{thm:k1k2} uses
ideas from~\cite{GKS}.

We first prove Theorem~\ref{thm:p=1 mod 4}. We can assume that
$L<p-1$. Define positive integers $u$ and $v$ from the
representation $p=u^2+v^2$. Let $h$ be an integer defined from
$$
h\equiv u/v\pmod p.
$$
Note that the set
$$
\cA=\{x\in \cI;\quad hx\in \cI\}
$$
is nonempty (indeed $v\in \cA$). Let $\cB=\{1, h\}\pmod p$. Let us
prove that $\cI= \cA\cB.$ Assume contrary. Since $\cA\cB\subset
\cI,$ there is an element
$$
x\in \cI \setminus \cA\cB.
$$
Note that
$$
hx\not\in \cI,\quad h^{*}x\not\in \cI.
$$
Indeed, if $hx\in \cI$, then $x\in\cA$ and thus $x\in \cA\cB$,
contradiction. If $h^{*}x\in \cI$, then from $h(h^*x)=x\in\cI$ it
follows that $h^{*}x\in \cA$ and thus $x\in \cA\cB$, contradiction.

Therefore, for some $1\le s_1\le p-L-1$ and $1\le s_2\le p-L-1$ we
have
$$
hx\equiv -s_1\pmod p;\quad h^{*}x\equiv -s_2\pmod p.
$$
Since $h^2+1\equiv 0\pmod p$, it follows that $s_1+s_2\equiv 0\pmod
p.$ Impossible.

Thus, we have that $\cI= \cA\cB$. In particular,
$$
|\cA|\ge \frac{L}{|\cB|}\ge \frac{p-1}{4}\ge 3,
$$
which shows that the decomposition is nontrivial and finishes the
proof of Theorem~\ref{thm:p=1 mod 4}.

Let us prove Theorem~\ref{thm:k1k2}. Since $\F^*_p=\F^*_p\cdot
\F^*_p$, we can assume that $k_1+k_2<p-1$. In particular, it follows
that $p\ge 11.$

We make the following observation: for any integer $x$ one of the
elements $2x\pmod p$ or  $2^*x\pmod p$ belongs to the interval
$\cI$. Indeed, if $2^*x\not\in \cI,$ it follows that $x\equiv
2n\pmod p$ for some integer $n$ with $k_2<n<p-k_1$. Then
$$
2x\equiv 4n\equiv 4n-2p\pmod p.
$$
Since
$$
-0.4(p-1)<4n-2p<0.4(p-1),
$$
it follows that $2x\pmod p\in\cI.$

Now we repeat the proof of Theorem~\ref{thm:p=1 mod 4}. Let
$$
\cA=\{x\in \cI\setminus\{0\};\quad 2x\in \cI\setminus\{0\}\}.
$$
Since $1\in \cA$, the set $\cA$ is nonempty. Let $\cB=\{1, 2\}\pmod
p$. Let us prove that $\cI\setminus\{0\}= \cA\cB.$ Assume contrary.
Since $\cA\cB\subset \cI\setminus\{0\},$ there is an  element
$$
x\in \{\cI\setminus\{0\}\} \setminus \cA\cB.
$$
If $2x\in \cI\setminus\{0\}$, then $x\in\cA$ and thus $x\in \cA\cB$,
contradiction. If $2^*x\in \cI \setminus\{0\}$, then $2^*x\in \cA$
and thus $x\in \cA\cB$, contradiction.

Then $2x\in \cI\setminus\{0\}$ and $2^*x\in \cI \setminus\{0\}$
which contradicts to the above made observation.

Thus, we have
$$
\cI\setminus\{0\}= \cA\cB,
$$
with $|\cB|=|\{1,2\}|=2$. In particular,
$$
|\cA|\ge \frac{k_1+k_2}{2}\ge 0.4(p-1)>2,
$$
which shows that the decomposition is nontrivial and finishes the
proof of Theorem~\ref{thm:k1k2}.

\end{document}